%%%%%%%%%%%%%%%%%%%%%%%%%%%%%%%%%%%%%%%%%%%%%%%%%%%%%%%
\documentclass{amsart}
%%%%%%%%%%%%%%%%%%%%%%%%%%%%%%%%%%%%%%%%%%%%%%%%%%%%%%%
\usepackage{graphicx}
\usepackage[all]{xy}

\usepackage{amsmath}
\usepackage{amssymb}
\usepackage{amsthm}
\usepackage{tikz-cd}
%%%%%%%%%%%%%%%%%%%%%%%%%%%%%%%%%%%%%%%%%%%%%%%%%%%%%%%
\theoremstyle{plain}
\newtheorem{thm}{Theorem}[section]
\newtheorem{lemma}[thm]{Lemma}

\newtheorem{corollary}[thm]{Corollary}

\theoremstyle{definition}
\newtheorem{definition}[thm]{Definition}
\theoremstyle{remark}
\newtheorem{remark}[thm]{Remark}

%%%%%%%%%%%%%%%%%%%%%%%%%%%%%%%%%%%%%%%%%%%%%%%%%%%%%%%

\DeclareMathOperator{\cofib}{cofib}

\DeclareMathOperator{\vir}{vir}
\DeclareMathOperator{\id}{id}
\DeclareMathOperator{\pt}{pt}
\DeclareMathOperator{\Lag}{Lag}
\DeclareMathOperator{\qsp}{qsp}

%%%%%%%%%%%%%%%%%%%%%%%%%%%%%%%%%%%%%%%%%%%%%%%%%%%%%%%

\begin{document}

\title{Lagrangian correspondences and pullbacks of virtual fundamental classes} 
\author{Timo Sch\"urg}
\address{Dep. of Mathematics and Natural Sciences, Darmstadt University of Applied Sciences, Germany}
\email{timo.schuerg@h-da.de}

\maketitle

\begin{abstract}
    We argue that Lagrangian correspondences are the correct framework to study
    functoriality of virtual fundamental classes arising from a $-2$-symplectic
    derived structure.
\end{abstract}

\section{Introduction}

In any oriented Borel-Moore homology theory $A_*$, we can perform pullbacks of homology classes along a
local complete intersection morphism. Such a pullback operation along an l.c.i.
morphism $f\colon X \to Y$ will map the fundamental class $[Y] \in A_*(Y)$ to the
fundamental class $[X] \in A_*(X)$.

The story becomes more complicated when studying the virtual fundamental classes
introduced in \cite{behrend}.  These classes live in the Chow group of $X$. They
depend on an additional structure which is not inherent to the scheme structure.
This additional datum is a two term perfect obstruction theory, a morphism
$\mathbb{E} \to L_X$ in the derived category of $X$ subject to finiteness
conditions.

Since this additional information is not encoded in the scheme structure, a morphism $f
\colon X \to Y$ of schemes endowed with perfect obstruction theories need
not map the virtual fundamental class of $Y$ to the virtual class of $X$, even
if $f$ induces a pull back operation from the Chow groups of $Y$ to the Chow
groups of $X$.

The correct compatibilities between schemes with perfect obstruction
theories to ensure the virtual fundamental class pulls back as expected were
identified in \cite{manolache}. If we find a compatible relative perfect
obstruction theory sitting in a diagram
\[
    \begin{tikzcd}
        f^*\mathbb{E}_Y \ar[r] \ar[d] & \mathbb{E}_X \ar[r] \ar[d] & \mathbb{E}_f \ar[d] \\
        f^*L_Y \ar[r] & L_X \ar[r] & L_f
    \end{tikzcd}
\]
we can ensure the virtual fundamental class of $Y$ gets pulled back to the
virtual fundamental class of $X$.

The compatibility Manolache identified becomes natural in the setting of
quasi-smooth morphisms of quasi-smooth derived schemes in derived algebraic
geometry. Every such morphism automatically carries the necessary compatibility
condition, and maps the virtual fundamental class of $Y$ to the virtual
fundamental class of $X$.

This makes the case that studying virtual fundamental classes is best done in
the category of derived schemes with quasi-smooth morphisms.

This notion can be made axiomatic using oriented Borel-Moore functors with
quasi-smooth pullbacks. These are homology theories with pull-back or Gysin
operations along quasi-smooth morphisms. Using this formalism, the virtual class
of a quasi-smooth derived scheme with structure morphism $\pi \colon X \to \pt$ is simply $[X]^{\vir} =
\pi_X ^* ([1])$ and every quasi-smooth morphism of quasi-smooth schemes
automatically satisfies $f^*([Y]^{\vir}) = [X]^{\vir}$.

A new type of virtual fundamental classes were introduced in
\cite{borisov} and \cite{ohthomas}. These do not arise from a two term perfect
obstruction theory, but from a three term perfect obstruction theory with an
additional symmetry. This additional structure again has its origin in derived
algebraic geometry. Every $-2$-shifted symplectic derived scheme \cite{shifted}
naturally induces the necessary structure on its underlying classical scheme.

This leads to the question studied in this paper: Just as quasi-smooth morphisms
are the correct setting to study pull-backs of virtual fundamental classes
arising from perfect obstruction theories, what
is the correct setting for the virtual fundamental classes arising from
$-2$-symplectic derived structures?

As in the case of the virtual fundamental classes of Behrend and Fantechi, a
morphism $f\colon X \to Y$ of schemes need not respect this additional structure. The
question which additional compatibilities are necessary was studied in
\cite{park}. Park shows that the diagram
\[
    \begin{tikzcd}
        \mathbb{D}^{\vee}[2] \arrow[r] \arrow[d] &
        \mathbb{E}_X \arrow[r] \arrow[d] & \mathbb{E}_f \arrow[d] \\
        f^* \mathbb{E}_Y \arrow[r] \arrow[d] & \mathbb{D} \arrow[r] \arrow[d] &
        \mathbb{E}_f \arrow[d]\\ f^* L_Y \arrow[r] & L_X \arrow[r] & L_f \\
    \end{tikzcd}
\]
ensures that the virtual fundamental class of $Y$ gets pulled back to the
virtual fundamental class of $X$.

Note that here something new shows up, which was not present in the case of two
term perfect obstruction theories: the complex $\mathbb{D}$.

The aim of this note is to make the case that Lagrangian correspondences are the
right setting to study functoriality properties of the Oh-Thomas virtual
classes. To justify this approach, we show some properties of oriented
Borel-Moore functors with quasi-smooth pullbacks on this category and identify
the necessary compatibilities to ensure the Oh-Thomas virtual class behaves just
like a fundamental class.

We then specialize to giving a geometric interpretation of the complex
$\mathbb{D}$ popping up in Park's pullback compatibility diagram. The geometric
interpretation  should be a space $\Gamma$ having $\mathbb{D}$ as cotangent
complex. We introduce the notion of a Lagrangian graph with \'etale section of a
morphism of shifted symplectic derived schemes and verify that the cotangent complex of
such a graph gives exactly the compatibilities identified by Park in
\cite{park}.

\subsection{Related Works}

The idea that Lagrangian correspondences are the correct setting to study
virtual fundamental classes arising from -2-symplectic derived structures seems
to be well known in the community. It is explicitly stated in
\cite[Remark 5.2.5]{parkpushforward}, which I was not aware of when I started
this project. I nevertheless hope this paper can provide some benefit.

\subsection{Overview}

After recalling the virtual fundamental class of Oh and Thomas in Section
\ref{vfc}, we study Lagrangian correspondences and their fundamental classes in
an arbitrary oriented Borel-Moore homology theory in section \ref{corr}. This
section aims to make the case that Lagrangian correspondences are key to
obtaining compatibilities between virtual fundamental classes of shifted
symplectic derived schemes. In the final section \ref{hf} we introduce the
notion of Lagrangian graph with \'etale section and verify such a graph induces Park's compatibility
diagram.

\subsection{Acknowledgements}
I would like to thank Hyeonjun Park for helpful comments on an early draft.

\section{The virtual fundamental class of Oh-Thomas}\label{vfc}

In this section we give a brief summary of \cite{ohthomas}, in particular where
the derived structure comes into play.

Given a $(-2)$-symplectic derived scheme $X$ of virtual dimension $2n$, Oh and
Thomas construct a virtual fundamental class of dimension $n$ on the classical
scheme $t_0(X)$ underlying $X$.

To produce this class, they pull back the cotangent complex of $X$ along the
inclusion $j \colon t_0(X) \to X$ of the classical scheme underlying $X$. This
gives a morphism of complexes
\[
    \mathbb{E} := j^* L_X \to L_{t_0(X)}
\]
They
resolve the three-term complex $\mathbb{E}$ as a self-dual complex of locally
free sheaves $T \to E \to T^*$. The symplectic form on $X$ induces a quadratic
form $q$ on the middle term $E$, making $E$ an orthogonal bundle.

The key point where the derived structure enters is showing that the pull back
of the Behrend-Fantechi normal cone gives an \emph{isotropic} cone in $E$. This
crucially depends on the Darboux theorems of \cite{BBBJ}, which in turn depends
crucially on the $(-2)$-symplectic structure on $X$.

The virtual fundamental class is then the localized square-root Euler class of
the isotropic cone $C$ in $E$, giving a class in $A_n(t_0(X), \mathbb{Z}[\frac 1
2])$.

\section{Lagrangian correspondences}\label{corr}

We assume the reader is acquainted with the basics of derived algebraic geometry
\cite{toenems, HAGII}, including the notion of $n$-shifted symplectic structure
\cite{shifted}.

Since the notion of a Lagrangian correspondence might not be so well known, we
recall the definition.

\begin{definition}
    Let $(X, \omega)$ and $(Y,\sigma)$ be $n$-shifted symplectic derived
    schemes. A \emph{Lagrangian correspondence} from $X$ to $Y$ is a span
    \[
        \begin{tikzcd}
            & M \ar[dl, "p" swap] \ar[dr, "q"] & \\
            X & & Y
        \end{tikzcd}
    \]
    inducing a homotopy cartesian square of complexes 
    \[
        \begin{tikzcd}
            T_M \ar[r] \ar[d]& p^* T_X \ar[d]\\
            q^* T_Y \ar[r]& L_M [n] .
        \end{tikzcd}
    \]
    on $M$.
\end{definition}
\begin{remark}\label{rem:AlternativeLagrange}
    An equivalent definition is that $M \to X \times \bar{Y} $ is a Lagrangian
    morphism.  Here $\bar{Y}$ is $Y$ equipped with the negative symplectic form
    $- \sigma$.
\end{remark}

A special case of Lagrangian correspondences is given by symplectomorphisms. The
following definition is taken from \cite{benbassat}.

\begin{definition}
    A weak equivalence $f \colon (X,\omega) \to
    (Y, \sigma)$ of derived schemes is a \emph{symplectomorphism} if
    \[
        \begin{tikzcd}
            & X \ar[dl, "id" swap] \ar[dr, "f"] & \\
            X \ar[rr, "f"]& & Y
        \end{tikzcd}
    \]
    is a Lagrangian correspondence.
\end{definition}

\begin{remark}
    Using Remark \ref{rem:AlternativeLagrange}, $f$ is a symplectomorphism if
    the graph morphism $\Gamma_f \colon X \to X \times \bar{Y}$ is a Lagrangian
    morphism.
\end{remark}

In \cite{haugseng} it is shown that there exists a nice $\infty$-category
$\Lag^n_{(\infty,1)}$ of $n$-symplectic Lagrangian correspondences. An important
ingredient is a theorem proved in \cite{calaque}, showing that two Lagrangian
correspondences can be composed.

%An immediate consequence of the definition is that in a Lagrangian
%correspondence to a point, that domain has to be $-2$-symplectic.
%
%\begin{lemma}
%    Let 
%    \[
%        \begin{tikzcd}
%            & M \ar[dl, "p" swap] \ar[dr, "q"] & \\
%            X \ar[rr, "f"]& & \pt 
%        \end{tikzcd}
%    \]
%    be a Lagrangian correspondence such that $q: M \to \pt$ is quasi-smooth.
%    Then X is $-2$-symplectic.
%\end{lemma}
%\begin{proof}
%    
%\end{proof}

We now want to relate the functoriality properties of the virtual fundamental
class to Lagrangian correspondences. To establish the relationship, we introduce
a subcategory of $\Lag^n_{(\infty, 1)}$.

\begin{definition}
    Let $\Lag^{n, \qsp}_{(\infty, 1)}$ denote the full subcategory of
    $\Lag^n_{(\infty, 1)}$ where $p: M \to X$ is proper and $q: M \to Y$ is
    quasi-smooth.
\end{definition}

To this subcategory we can apply oriented Borel-Moore functors with
quasi-smooth pullbacks introduced in \cite{lowrey} and improved on to a
bivariant formalism in \cite{annala}. 

We briefly recall the definition of such a functor.

\begin{definition}
    An \emph{oriented Borel-Moore functor with quasi-smooth pullbacks} is a
    functor $A_*$ from derived schemes to graded abelian groups satisfying the
    axioms of an oriented Borel-Moore functor with products of
    \cite{levinemorel} along with homomorphisms $f^* : A_*(Y) \to A_{*+d}(X)$ of
    graded abelian groups for each equi-dimensional quasi-smooth homomorphism
    $f: X \to Y$. This functor must further satisfy the axioms listed in \cite{lowrey}.
\end{definition}

\begin{remark}
    In particular, such a functor has push-forward operations along proper
    morphisms.
\end{remark}

Given such a functor, we immediately get a good theory of virtual fundamental
classes. Given a quasi-smooth scheme $X$, we apply the quasi-smooth pullback
along the structure morphism $\pi_X: X \to \pt$ to obtain a virtual fundamental class
\[
    [X]^{\vir} := \pi_X^*([1])
\]
in the homology theory $A_*$.

We now want to do something similar with symplectic derived schemes. Ideally,
the virtual fundamental class of Oh and Thomas should again be the pull-back of
the class $[1]$ along a morphism.

To incorporate the symplectic form, we restrict our attention to a special kind
of oriented Borel-Moore functors.

\begin{definition}
    A \emph{symplectic oriented Borel-Moore functor with quasi-smooth pullbacks}
    is an oriented Borel-Moore functor with quasi-smooth pullbacks such that for
    any two Lagrangian correspondences in $\Lag^{n, \qsp}_{(\infty, 1)}$
    \[
        \begin{tikzcd}
            & M \ar[dl, "p" swap] \ar[dr, "q"] & \\
            X & & \pt 
        \end{tikzcd}
    \]
    and
    \[
        \begin{tikzcd}
            & M' \ar[dl, "p'" swap] \ar[dr, "q'"] & \\
            X & & \pt 
        \end{tikzcd}
    \]
    we have
    \[
        p_* q^*[1] = p'_* q'^*[1] \in A_*(X).
    \]

\end{definition}

\begin{remark}\label{rem:independent}
    This condition ensures the virtual fundamental class only depends on the
    symplectic form and is independent of the choice of Lagrangian
    correspondence.
\end{remark}

We now apply such a functor to the category
$\Lag^{n,\qsp}_{(\infty,1)}$.

\begin{definition}
    Let
    \[
        Z = 
        \begin{tikzcd}
            & M \ar[rd, "q"] \ar[ld, "p" swap] & \\
            X & & Y
        \end{tikzcd}
    \]
    be a Lagrangian correspondence with $p$ proper and $q$ quasi-smooth of
    relative virtual dimension $d$. We then define the operation
    \[
        Z^* : A_*(Y) \to A_{*+d}(X)
    \] as $Z^* = p_* \circ q^*$.
\end{definition}

In particular, given a quasi-smooth Lagrangian $M$ in a symplectid derived
scheme $(X, \omega)$ and a Lagrangian correspondence
\[
    \begin{tikzcd}
        & M \ar[rd, "q"] \ar[ld, "p" swap] & \\
        X & & \pt 
    \end{tikzcd}.
\]
we obtain a class
\[
    [(X,\omega)]^{\vir} = q_*p^*([1]) \in A_n(X).
\]
Using Remark \ref{rem:independent}, the class is independent of the choice of
Lagrangian and only depends on the symplectic form.

\begin{remark}
    An interesting question I could not find the answer to in the existing
    literature is if there always exist Lagrangians with respect to a derived
    scheme $X$ with shifted symplectic form $\omega$.
\end{remark}

With these definitions in place, we can show that the fundamental class arising
from a shifted symplectic derived structure pulls pack to the fundamental class.

\begin{thm}\label{thm:func}
    Let$[(Y,\sigma)]^{\vir}$ denote the virtual class represented by the
    quasi-smooth Lagrangian correspondece
    $\begin{tikzcd}[cramped, sep=small]
        Y & \ar[l, "j"] N \ar[r] & \pt
    \end{tikzcd}$
    in a symplectic oriented Borel-Moore homology theory $A_*$ with quasi-smooth
    pullbacks.  Assume there exists a quasi-smooth Lagrangian correspondence
    $\begin{tikzcd}[cramped,sep=small]
        X & \ar[l, "p"] \Gamma \ar[r, "q" swap] & Y
    \end{tikzcd}$
    . Then
    \[
        p_*q^* [(Y,\sigma)]^{\vir}= [(X,\omega)]^{\vir}.
    \]
    %is a diagram of Lagrangian correspondences and $p \circ f = i$. Then
    %\[
    %    p_*q^* [(Y,N)]^{\vir}= [(X,M)]^{\vir}.
    %\]
\end{thm}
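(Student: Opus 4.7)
The plan is to compose the two Lagrangian correspondences and then reduce to the independence axiom of a symplectic oriented Borel-Moore functor. Concretely, I would form the composed span
\[
    \begin{tikzcd}
        & & \Gamma \times_Y N \ar[dl, "j'" swap] \ar[dr, "q'"] & & \\
        & \Gamma \ar[dl, "p" swap] \ar[dr, "q"] & & N \ar[dl, "j" swap] \ar[dr, "\pi_N"] & \\
        X & & Y & & \pt
    \end{tikzcd}
\]
where the fiber product is taken in derived schemes. By the composition theorem of \cite{calaque} invoked in the paper, the outer span $X \leftarrow \Gamma \times_Y N \to \pt$ is again a Lagrangian correspondence. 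Moreover, properness of $j$ and quasi-smoothness of $q$ are stable under derived base change, so $j'$ is proper and $q'$ is quasi-smooth; combined with properness of $p$ and quasi-smoothness of $\pi_N$, the composed correspondence lies in $\Lag^{n,\qsp}_{(\infty,1)}$.

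Next I would unravel the operation $p_* q^*$ applied to $[(Y,\sigma)]^{\vir} = j_* \pi_N^* ([1])$ using the axioms of an oriented Borel-Moore functor with quasi-smooth pullbacks. The key input is base change along the (derived) cartesian square with horizontal quasi-smooth map $q$ and vertical proper map $j$, which gives the identity $q^* j_* = j'_* (q')^*$ in $A_*$. Combined with functoriality of proper pushforward and of quasi-smooth pullback, this yields
\[
    p_* q^* [(Y,\sigma)]^{\vir} = p_* q^* j_* \pi_N^* ([1]) = (p \circ j')_* (\pi_N \circ q')^* ([1]),
\]
which is precisely the operation $Z^*([1])$ associated to the composed Lagrangian correspondence $Z = (X \leftarrow \Gamma \times_Y N \to \pt)$.

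To finish, I would invoke the defining property of a symplectic oriented Borel-Moore functor with quasi-smooth pullbacks: since both the composed correspondence above and any correspondence chosen to represent $[(X,\omega)]^{\vir}$ are objects of $\Lag^{n,\qsp}_{(\infty,1)}$ over $X$ with target $\pt$, the independence axiom forces $(p \circ j')_* (\pi_N \circ q')^* ([1]) = [(X,\omega)]^{\vir}$, as emphasized in Remark \ref{rem:independent}.

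The main obstacle is verifying that the base change $q^* j_* = j'_* (q')^*$ is indeed among the formal properties provided by an oriented Borel-Moore functor with quasi-smooth pullbacks on derived schemes, and that Calaque's composition output actually lands in $\Lag^{n,\qsp}_{(\infty,1)}$ (in particular, that the derived fiber product presentation is the one relevant for computing the operations). Both points are standard in the bivariant formalism of \cite{annala} and in the derived Lagrangian intersection setup, but they are the only nontrivial steps; once granted, the argument is purely formal manipulation of the two identities \emph{base change} and \emph{independence under choice of Lagrangian}.
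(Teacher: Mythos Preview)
Your proposal is correct and follows essentially the same route as the paper: form the composed correspondence via the derived fibre product, use that the outer span is again Lagrangian (hence computes $[(X,\omega)]^{\vir}$ by the independence axiom), and reduce the comparison to the base change identity $q^* j_* = j'_* (q')^*$ coming from the homotopy cartesian square. If anything, you are more explicit than the paper about checking that the composite lies in $\Lag^{n,\qsp}_{(\infty,1)}$ and about invoking the independence axiom at the end.
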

\begin{proof}
    We can compose the two Lagrangian correspondences to arrive at
    \[
        \begin{tikzcd}
            & & M \ar[ld, "f" swap] \ar[rd, "g"]& & \\
            & \Gamma \ar[rd, "q"] \ar[ld, "p" swap] & & N \ar[ld, "j" swap] \ar[rd, "\pi"] & \\
            X & & Y & & \pt
        \end{tikzcd}
    \]
    The outer diagram is again a Lagrangian correspondence, which gives us
    \[
        [X, \omega]^{\vir} = (p \circ f)_*(\pi \circ g)^*[1].
    \]
    By definition of the composition of Lagrangian correspondences, the upper
    square in the diagram is homotopy cartesian, so we have $q^*j_* = f_*g^*$.
\end{proof}

As a basic sanity check, we observe that symplectomorphisms preserve pullbacks
of virtual fundamental classes.

\begin{corollary}\label{cor:sanity}
    Let $[(X,\omega)]^{\vir}$ and $[(Y,\sigma)]^{\vir}$ be as above, and let $f\colon X
    \to Y$ be a symplectomorphism. Then
    \[
        f^* [(Y,\sigma)]^{\vir} = [(X,\omega)]^{\vir}.
    \]
\end{corollary}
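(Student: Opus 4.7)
The plan is to derive the corollary as a direct application of Theorem \ref{thm:func} to the Lagrangian correspondence that \emph{defines} a symplectomorphism. By definition, a symplectomorphism $f\colon (X,\omega) \to (Y,\sigma)$ gives a Lagrangian correspondence
\[
    \begin{tikzcd}
        & X \ar[dl, "\id" swap] \ar[dr, "f"] & \\
        X & & Y
    \end{tikzcd}
\]
so it is natural to take this span as the $\Gamma$ in the hypothesis of Theorem \ref{thm:func}, with $p = \id$ and $q = f$.

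First I would verify that this correspondence lies in the subcategory $\Lag^{n,\qsp}_{(\infty,1)}$: the left leg $\id\colon X \to X$ is certainly proper, and the right leg $f$, being a weak equivalence, has zero cotangent complex and is in particular quasi-smooth of relative virtual dimension $0$. Hence the hypotheses of Theorem \ref{thm:func} are satisfied.

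Applying that theorem directly yields
\[
    \id_* \, f^*\, [(Y,\sigma)]^{\vir} = [(X,\omega)]^{\vir},
\]
and since $\id_*$ is the identity on $A_*(X)$, this is exactly the desired equality $f^*[(Y,\sigma)]^{\vir} = [(X,\omega)]^{\vir}$.

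There is no serious obstacle; the only point to be mindful of is that the identity map of a derived scheme really is proper (obvious) and that a weak equivalence really is quasi-smooth (immediate from the fact that its cotangent complex vanishes, so it is étale). Once these two trivialities are noted, the corollary falls out of Theorem \ref{thm:func} with no further work, and this is why it is presented as a sanity check rather than an independent result.
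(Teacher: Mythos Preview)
Your proposal is correct and follows essentially the same approach as the paper: both take the span $X \xleftarrow{\id} X \xrightarrow{f} Y$ that defines the symplectomorphism as the Lagrangian correspondence $\Gamma$ and invoke Theorem~\ref{thm:func}. The paper's proof simply draws the composed diagram (with the Lagrangian $N$ representing $[(Y,\sigma)]^{\vir}$) rather than citing the theorem verbally, but the content is identical; your explicit check that $\id$ is proper and that the weak equivalence $f$ is quasi-smooth is a welcome clarification that the paper leaves implicit.
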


\begin{proof}
    Use the diagram 
    \[
        \begin{tikzcd}
            & & M \ar[ld] \ar[rd]& & \\
            & X \ar[rd, "\id"] \ar[ld, "f" swap] & & N \ar[ld, "j" swap] \ar[rd, "\pi"] & \\
            X \ar[rr, "f"]& & Y & & \pt
        \end{tikzcd}
    \]
\end{proof}

In summary, we expect functoriality for any symplectic oriented Borel-Moore
homology theory.
It should be an interesting question to study if Chow homology is a symplectic
oriented Borel-Moore homology theory.

\section{Virtual Pullbacks of Oh-Thomas-classes}\label{hf}

In \cite{park}, Park studied the following question: Given two
quasi-projective schemes $X$ and $Y$ equipped with $(-2)$-symmetric
obstruction theories and a morphism $f$ equipped with a perfect
obstruction theory, when does the pull back of the Oh-Thomas virtual
fundamental class of $Y$ along $f$ map to the Oh-Thomas virtual
fundamental class of $X$?

Park identified that an additional complex is needed to ensure this is
the case, noted by $\mathbb{D}$ in his paper. We briefly recall the
necessary compatibilities he found.

Let $X$ and $Y$ denote quasi-projective (underived) schemes equipped
with $(-2)$-symmetric obstruction theories $\phi_X \colon \mathbb{E}_X
\to L_X$ and $\phi_Y \colon \mathbb{E_Y} \to L_Y$. Assume further we
have a morphism $f \colon X \to Y$ equipped with a relative perfect
obstruction theory $\phi_f \colon \mathbb{E}_f \to L_f$.

Given this relative perfect obstruction theory, we can perform the virtual
pullback $f^{!}$ defined by \cite{manolache}. This will not map the Oh-Thomas
virtual fundamental class of $Y$ to $X$. Instead, we have to add 
an additional complex $\mathbb{D}$ fitting into a compatibility diagram

\[
    \begin{tikzcd}
        \mathbb{D}^{\vee}[2] \arrow[r] \arrow[d] &
        \mathbb{E}_X \arrow[r] \arrow[d] & \mathbb{E}_f \arrow[d] \\
        f^* \mathbb{E}_Y \arrow[r] \arrow[d] & \mathbb{D} \arrow[r] \arrow[d] &
        \mathbb{E}_f \arrow[d]\\ f^* L_Y \arrow[r] & L_X \arrow[r] & L_f \\
    \end{tikzcd}
\]

In this diagram the horizontal rows are cofibre sequences.

We now want to give the derived geometric interpretation of the complex
$\mathbb{D}$ promised in the introduction using Lagrangian correspondences.
Motivated by Theorem \ref{thm:func}, we hope to find a Lagrangian
correspondence from $X \to Y$ which allows us to relate the classes.

\begin{lemma}\label{lem:comp}
    Let $X$ and $Y$ be $-2$-symplectic derived
    schemes. Assume that we have a Lagrangian correspondence
    \[
        \begin{tikzcd}
            & \Gamma \ar[rd, "q"] \ar[ld, "p" swap] & \\
            X & & Y
        \end{tikzcd}
    \].
    Then we have the following diagram of cotangent complexes
    \[
        \begin{tikzcd}
            T_\Gamma[2] \arrow{r}{\alpha^{\vee}[2]} \arrow{d}{\beta^{\vee}[2]
            swap} & p^*
            L_X \arrow{r} \arrow{d}{\alpha} & L_q \arrow[d, "\phi"]\\
            q^* L_{Y} \arrow{r}{\beta} & L_{\Gamma} \arrow[r] & L_q 
        \end{tikzcd}
    \]
    where the rows are cofibre sequences and $\phi$ is an equivalence.
\end{lemma}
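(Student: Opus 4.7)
The plan is to deduce the lemma directly from the defining homotopy cartesian square of the Lagrangian correspondence by applying the $-2$-shifted symplectic equivalences on $X$ and $Y$ and invoking stability of the ambient $\infty$-category; the non-formal content will all lie in identifying the maps correctly.

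First, I would shift the Lagrangian square by $[2]$. The defining square has corners $T_\Gamma$, $p^* T_X$, $q^* T_Y$ and $L_\Gamma[-2]$. The symplectic forms on $X$ and $Y$ provide equivalences $T_X[2] \simeq L_X$ and $T_Y[2] \simeq L_Y$, so after pullback and shifting the square has exactly the corners $T_\Gamma[2]$, $p^* L_X$, $q^* L_Y$ and $L_\Gamma$ appearing on the left of the statement. To identify the arrows, recall that the maps in the Lagrangian square are the canonical comparison maps $T_\Gamma \to p^* T_X$ and $T_\Gamma \to q^* T_Y$, obtained by dualising the cotangent comparisons $\alpha \colon p^* L_X \to L_\Gamma$ and $\beta \colon q^* L_Y \to L_\Gamma$, while the remaining two legs are those comparisons themselves (twisted by the symplectic equivalences). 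Under the identifications above and after the shift, the four arrows of the shifted Lagrangian square become precisely $\alpha^\vee[2]$, $\beta^\vee[2]$, $\alpha$ and $\beta$, so it agrees with the left square of the lemma.

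Next, I would extend this square to the full $2 \times 3$ diagram. The bottom row is simply the standard cotangent cofibre sequence $q^* L_Y \to L_\Gamma \to L_q$ attached to the morphism $q$. Since derived schemes live in a stable $\infty$-category, a square is homotopy cartesian if and only if it is homotopy cocartesian; applying this to the left square forces the cofibre of $\alpha^\vee[2]$ to be canonically equivalent to the cofibre of $\beta$, namely $L_q$. This produces the top cofibre sequence $T_\Gamma[2] \to p^* L_X \to L_q$ and supplies the induced equivalence $\phi$ on the right.

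The main obstacle will be the identification of arrows in the first step: making sure that the two legs of the Lagrangian square really match the shifted duals of the natural cotangent comparisons under the chosen symplectic equivalences, with signs, shifts and dualities correctly tracked. Once that bookkeeping is in place, the rest is a formal consequence of stability together with the standard cotangent cofibre sequence for $q$.
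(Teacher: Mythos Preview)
Your proposal is correct and follows essentially the same route as the paper: start from the defining Lagrangian square, shift by $2$ and use the $(-2)$-symplectic equivalences $T_X[2]\simeq L_X$, $T_Y[2]\simeq L_Y$ to obtain the left square, then invoke stability (cartesian $=$ cocartesian) to identify the two cofibres, with the bottom cofibre recognised as $L_q$ via the Jacobi--Zariski sequence for $q$. The only addition in your write-up is the explicit bookkeeping for why the arrows become $\alpha$, $\beta$, $\alpha^\vee[2]$, $\beta^\vee[2]$, which the paper leaves implicit.
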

\begin{proof}
    Since $\Gamma$ is a Lagrangian in $X \times Y$, we have by definition a
    cartesian and co-cartesian square
    \[
        \begin{tikzcd}
            T_{\Gamma} \ar[r] \ar[d] & p^* T_X \ar[d] \\
            q^* T_Y \ar[r] & L_{\Gamma}[-2]
        \end{tikzcd}
    \]
    Shifting the diagram by 2 and using the $(-2)$-symplectic structures on $X$
    and $Y$, we arrive at the left square of the diagram we
    wish to obtain. Since the diagram is cocartesian, the homotopy cofibres are
    equivalent, giving us the equivalence $\phi$. To conclude, we observe that
    by the Jacobi-Zariski sequence of the morphism $q$ we know that $\cofib(\beta) =
    L_q$.
\end{proof}

This already looks like the diagram identified by Park in \cite{park}.
The missing gadgets are the morphism $f \colon X \to Y$ and its cotangent
complex $L_f$. To bring these into play we need one further definition. 
To motivate the definition, recall from Section \ref{corr} that our goal is to
find a suitable Lagrangian correspondence relating the virtual fundamental
classes.

\begin{definition}\label{def:lagrangiangraph}
    Let $f \colon X \to Y$ be a morphism of $n$-shifted symplectic derived
    schemes $(X, \omega_X)$ and $(Y, \omega_Y)$. We call $\Gamma$ a Lagrangian
    graph with \'etale section for $f$ if there exists a Lagrangian correspondence
    \[
        \begin{tikzcd}
            & \Gamma \ar[rd, "q"] \ar[ld, "p" ] & \\
            X \ar[ur, bend left, "s"] \ar[rr, "f"]& & Y
        \end{tikzcd}
    \]
    such that $L_s \simeq 0$ and $p \circ s = \id$. 
\end{definition}

\begin{remark}
    This definition does not fit exactly with the conditions found in
    Theorem \ref{thm:func}. In particular, the \'etale morphism $s$ does
    not show up there. I was unable to make a better connection between the two
    approaches, although I assume there is a more general result for arbitrary
    Lagrangian correspondence.
    
    For instance Park can prove the above result without assuming a map from $X
    \to Y$ of derived schemes, only on the level of classical truncations.
    He considers Lagrangian correspondences
    $
    \begin{tikzcd}[cramped,sep=small]
        X & M \ar[r] \ar[l] & Y
    \end{tikzcd}
    $
    such that the classical truncation of $L \to X$ is an isomorphsim and $L \to
    Y$ is quasi-smooth, along with the existence of a compatible morphism $t_0(X) \to
    t_0(Y)$.
\end{remark}

\begin{corollary}\label{cor:comp}
    Assume $f \colon X \to Y$ is a morphism of $(-2)$-symplectic derived
    schemes, and that there exists a Lagrangian graph with \'etale section for $f$. We then have the
    following diagram of complexes on $X$
    \[
        \begin{tikzcd}
            T_\Gamma[2] \arrow{r}{\alpha^{\vee}[2]} \arrow{d}{\beta^{\vee}[2]} & p^*
            L_X \arrow{r} \arrow{d}{\alpha} & L_q \arrow[d, "\phi"]\\
            q^* L_{Y} \arrow{r}{\beta} & L_{\Gamma} \arrow[r] & L_q 
        \end{tikzcd}
    \]
    where the horizontal rows are cofibre sequences.
\end{corollary}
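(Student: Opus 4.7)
The proof is a near-immediate consequence of Lemma \ref{lem:comp}. By Definition \ref{def:lagrangiangraph}, a Lagrangian graph with \'etale section for $f$ is in particular a Lagrangian correspondence $\begin{tikzcd}[cramped,sep=small] X & \Gamma \ar[l,"p" swap] \ar[r,"q"] & Y \end{tikzcd}$ between the two $(-2)$-symplectic derived schemes, which is exactly the hypothesis of Lemma \ref{lem:comp}. Invoking that lemma directly produces the claimed diagram together with the fact that its horizontal rows are cofibre sequences.

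To read the diagram as one genuinely living on $X$ rather than on $\Gamma$, I would pull back along the \'etale section $s \colon X \to \Gamma$. Since $s^*$ is exact it preserves cofibre sequences, so the rows of the pulled-back diagram remain cofibre sequences. The identifications that match the pulled-back diagram against Park's compatibility diagram come from the defining properties of $s$: from $p \circ s = \id$ we obtain $s^* p^* L_X \simeq L_X$; from $q \circ s = f$, forced by the commutative triangle in Definition \ref{def:lagrangiangraph}, we obtain $s^* q^* L_Y \simeq f^* L_Y$; and from $L_s \simeq 0$, together with the Jacobi-Zariski cofibre sequences $s^* L_\Gamma \to L_X \to L_s$ and $s^* L_q \to L_f \to L_s$, we obtain $s^* L_\Gamma \simeq L_X$ and $s^* L_q \simeq L_f$, as well as the dual identifications for tangent complexes.

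I expect no substantive obstacle here; the only real work is bookkeeping. One must verify that the structure maps $\alpha$, $\beta$ and $\phi$ of Lemma \ref{lem:comp} transform correctly under $s^*$ and intertwine with the canonical identifications above in the expected way. This should follow from the naturality of the Jacobi-Zariski sequences and the commutativity of the triangles $p \circ s = \id$ and $q \circ s = f$ built into Definition \ref{def:lagrangiangraph}, together with the self-duality of the diagram in Lemma \ref{lem:comp} coming from the $(-2)$-shifted symplectic structures, which ensures that the vertical arrows on the left are consistent with the shifts.
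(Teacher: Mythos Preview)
Your proposal is correct and follows exactly the paper's approach: apply Lemma \ref{lem:comp} to the Lagrangian correspondence underlying the graph, then pull back along the \'etale section $s$ to obtain a diagram on $X$, using $L_s \simeq 0$ for the identification $s^* L_q \simeq L_f$. In fact you supply more of the bookkeeping (the identifications $s^* p^* L_X \simeq L_X$, $s^* q^* L_Y \simeq f^* L_Y$, $s^* L_\Gamma \simeq L_X$ via Jacobi--Zariski) than the paper's one-line proof spells out.
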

\begin{proof}
    Pull back the diagram of \ref{lem:comp} to $X$ using $s$ and observe $s^*
    L_q \simeq L_f$.
\end{proof}

We now have everything in place to show that morphisms of derived schemes which
have a Lagrangian graph with \'etale section give nice pullback operations for
virtual fundamental classes.

\begin{thm}
    Let $f \colon X \to Y$ be a quasi-smooth morphism of $(-2)$-symplectic
    derived schemes, and assume there exists a Lagrangian graph with \'etale
    section for $f$.  Let $ t_0(f) \colon t_0(X) \to t_0(Y) $ be the underlying
    morphism of classical schemes. Then
    \[
        t_0(f)^* [Y, \sigma]^{\vir} = [X, \omega]^{\vir} \in A_n(t_0(X), \mathbb{Z}[\frac 1 2])
    \]
\end{thm}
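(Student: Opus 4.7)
The plan is to reduce the statement to Park's virtual pullback theorem from \cite{park} by exhibiting his compatibility diagram on the classical truncation $t_0(X)$.

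First, I would apply Corollary \ref{cor:comp} to the Lagrangian graph with \'etale section for $f$, which produces a two-row diagram of cotangent complexes on $X$ with cofibre-sequence rows and equivalence in the rightmost column. Restricting this diagram along the inclusion $j \colon t_0(X) \to X$ and adjoining the standard cotangent cofibre sequence $f^* L_Y \to L_X \to L_f$ on the bottom yields a three-row diagram of complexes on $t_0(X)$, which is the candidate for Park's diagram.

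Next I would identify the terms. Using $p \circ s = \id$, the cotangent fibre sequence for $f = q \circ s$, and $L_s \simeq 0$, one deduces $s^* L_q \simeq L_f$ and reads off
\[
    \mathbb{E}_X = j^* L_X, \quad \mathbb{E}_f = j^* L_f, \quad \mathbb{D} = j^* s^* L_\Gamma, \quad f^* \mathbb{E}_Y = j^* f^* L_Y,
\]
with the identification $\mathbb{D}^\vee[2] \simeq j^* s^* T_\Gamma[2]$ following from the dualizability of the cotangent complex of $\Gamma$. The quasi-smoothness of $f$ ensures that $\mathbb{E}_f = L_f$ has Tor-amplitude $[-1,0]$, supplying the relative perfect obstruction theory required by Park. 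With these identifications, the diagram produced above is precisely Park's compatibility diagram, and the conclusion then follows directly from his main theorem.

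The main obstacle I anticipate is verifying that the self-duality extracted from the left square of Corollary \ref{cor:comp} --- which arises from the Lagrangian condition on $\Gamma \to X \times \bar Y$ together with the $(-2)$-symplectic forms on $X$ and $Y$ --- actually agrees with the symmetric structure on $\mathbb{E}_X$ used by Oh and Thomas (and hence by Park) to construct $[X,\omega]^{\vir}$, and similarly for $Y$. This is a compatibility between two a priori distinct presentations of the same self-duality, and pinning it down will likely require unwinding the Darboux-type results of \cite{BBBJ} that sit behind the Oh--Thomas construction recalled in Section \ref{vfc}.
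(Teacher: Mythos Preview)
Your proposal is correct and follows essentially the same approach as the paper: obtain Park's compatibility diagram from Corollary~\ref{cor:comp} (restricted to $t_0(X)$), use quasi-smoothness of $f$ to furnish the relative perfect obstruction theory, and invoke Park's theorem. The paper's proof is in fact terser than yours and does not address the self-duality compatibility you flag as a potential obstacle, so your level of detail already exceeds what is given there.
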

\begin{proof}
    Combing Lemma \ref{lem:comp} and Corollary \ref{cor:comp}, we obtain Park's
    compatibility diagram on $X$. Since $f$ is assumed to be quasi-smooth, the
    complex $L_f$ is relative perfect obstruction theory for $t_0(f)$.
\end{proof}

As a basic sanity check, we again verify that symplectomorphisms preserve
virtual fundamental classes, as in Corollary \ref{cor:sanity}.

\begin{corollary}
   Let $f \colon (X, \omega) \to (Y, \sigma)$ be a symplectomorphism of
   $-2$-symplectic derived schemes. Then
   \[
       f^*([Y, \sigma]^{\vir}) = [X, \omega]^{\vir} \in A_n(t_0(X), \mathbb{Z}[\frac 1 2])
   \]
\end{corollary}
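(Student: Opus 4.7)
The plan is to deduce this from the preceding theorem by exhibiting an explicit Lagrangian graph with \'etale section for the symplectomorphism $f$. The natural candidate is $\Gamma = X$ itself, with $p = \id_X$, $q = f$, and section $s = \id_X$. By the definition of a symplectomorphism recalled above, the span
\[
    \begin{tikzcd}
        & X \ar[dl, "\id" swap] \ar[dr, "f"] & \\
        X \ar[rr, "f"] & & Y
    \end{tikzcd}
\]
is already a Lagrangian correspondence, so the input data of Definition \ref{def:lagrangiangraph} is in hand.

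Next I would check the two extra conditions. Since $s = \id_X$, its cotangent complex $L_s$ is trivially zero, and the composition $p \circ s = \id_X \circ \id_X = \id_X$ is the identity on the nose. Thus $\Gamma = X$ is a Lagrangian graph with \'etale section for $f$ in the sense of Definition \ref{def:lagrangiangraph}.

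The final hypothesis of the preceding theorem is that $f$ is quasi-smooth. But $f$ is a weak equivalence, hence its relative cotangent complex $L_f$ vanishes, so $f$ is quasi-smooth of relative virtual dimension $0$. Invoking the preceding theorem now yields
\[
    t_0(f)^*[(Y,\sigma)]^{\vir} = [(X,\omega)]^{\vir} \in A_n(t_0(X), \mathbb{Z}[\tfrac{1}{2}]),
\]
and since $f$ is an equivalence of derived schemes, $t_0(f)$ coincides with the classical truncation induced by $f$, giving the stated identity. The only mild subtlety is making sure the conventions match so that ``$f^*$'' in the statement refers to the pullback along this classical truncation, but this is immediate from the setup of the preceding theorem and requires no further work.
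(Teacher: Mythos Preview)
Your proposal is correct and is exactly the argument the paper intends: the paper does not spell out a proof but simply points back to Corollary~\ref{cor:sanity}, whose proof uses the same Lagrangian correspondence $\Gamma = X$ with $p = \id_X$ and $q = f$. Taking $s = \id_X$ as the \'etale section and noting that a weak equivalence is quasi-smooth then feeds directly into the preceding theorem, just as you wrote.
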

\bibliographystyle{plain}
\bibliography{mybib}
\end{document}